\def\ee{\end{equation}}
\def\bea{\begin{eqnarray}}
\def\eea{\end{eqnarray}}
\def\bz{\bar z}
\def\bs{\bar s}
\def\p{\partial}
\def\bp{\bar\partial}
\def\bb{\bar b}
\def\d{\delta}
\newcommand{\szego}{Szeg\"o\ }
\renewcommand{\Re}{{\operatorname{Re}\,}}
\renewcommand{\Im}{{\operatorname{Im}\,}}
\renewcommand{\epsilon}{\varepsilon}
\newcommand{\var}{{\operatorname{Var}}}
\newcommand{\Var}{{\bf Var}}
\newcommand{\inv}{^{-1}}
\newcommand{\kahler}{K\"ahler }
\newcommand{\PP}{{\mathbb P}}
\newcommand{\R}{{\mathbb R}}
\newcommand{\C}{{\mathbb C}}
\newcommand{\kcalomega}{\mathcal{K}_{[\omega_0]}}
\newcommand{\CP}{\C\PP}
\renewcommand{\d}{\partial}
\newcommand{\dbar}{\bar\partial}
\newcommand{\ddbar}{\partial\dbar}
\newcommand{\E}{{\mathbf E}}
\newcommand{\half}{{\textstyle \frac 12}}
\newcommand{\vol}{{\operatorname{Vol}}}
\newcommand{\bcal}{\mathcal{B}}
\newcommand{\dcal}{\mathcal{D}}
\newcommand{\fcal}{\mathcal{F}}
\newcommand{\ical}{\mathcal{I}}
\newcommand{\ncal}{\mathcal{N}}
\newcommand{\pcal}{\mathcal{P}}
\newcommand{\ep}{\varepsilon}
\newcommand{\de}{\delta}
\newcommand{\om}{\omega}
\newtheorem{theo}{{\sc Theorem}}[section]
\newtheorem{maindefn}{{\sc Definition}}
\newtheorem{prop}[theo]{{\sc Proposition}}
\title{Heat kernel measures on  random surfaces}
\begin{document}

\author{Semyon Klevtsov$^{1,3}$ and Steve Zelditch$^{2,3}$}

\maketitle
{\small
\address{\it$^1$Mathematisches Institut, Universit\"at zu K\"oln, Weyertal 86-90, 50931 K\"oln, Germany}

\address{\it $^2$Department of Mathematics, Northwestern  University, Evanston, IL 60208, USA}

\address{\it $^3$Simons Center for Geometry and Physics, Stony Brook University, NY 11794, USA}

\vspace{.2cm}
}

\begin{center}

\email{\tt\footnotesize  sam.klevtsov@gmail.com, zelditch@math.northwestern.edu}
\end{center}

\begin{abstract}

The heat kernel  on the symmetric
space of positive definite Hermitian matrices
is used to endow the spaces of Bergman
metrics  of degree $k$ on a Riemann surface $M$ with a family of probability measures depending on a choice of
the background metric. Under a certain matrix-metric correspondence, 
each positive definite Hermitian matrix corresponds to a \kahler metric on $M$. The one and two point
functions of the random metric are calculated in a variety of limits as $k$ and $t$ tend to infinity. 
In the limit when the time $t$ goes to infinity the fluctuations of the random metric
around the background metric are the same as the fluctuations of random zeros of holomorphic sections. This is due to the fact that the random zeros form the boundary of the space of Bergman metrics.

\end{abstract}

%\tableofcontents

\section{Introduction} 

In a recent series of articles \cite{FKZ1,FKZ2}, the authors have been investigating a new approach to defining  `random surfaces'.  The main idea is to define integrals over the infinite dimensional  space $\kcalomega$ of  metrics of fixed area $2 \pi$ 
in a fixed conformal class $[\omega_0]$
on a Riemann surface $M$ as limits 
\begin{equation} \label{INT1} \int_{\kcalomega} F(g) e^{- S(g)} \dcal
g := \lim_{k \to \infty} \int_{\bcal_k} F_k(g) e^{- S_k(g)}
\dcal_k g
\end{equation}
of integrals over  finite dimensional spaces $\bcal_k$ of  {\it Bergman metrics}.  Given a background metric\footnote{With some abuse of notation we make no distinction everywhere between the metric $g$ and the corresponding \kahler form $\omega$, connected as $\omega=ig_{a\bb}dz^a\wedge d\bz^{\bb}$.} $\omega_0$ and a choice of 
a basis $\{s_j(z)\}$ of holomorphic sections of $L^k\to M$, the spaces $\bcal_k$
can be identified with the
non-positively curved  symmetric
space $\pcal_{N_k} :=SL(N_k, \C)/SU(N_k)$ of
 positive definite Hermitian matrices.  The
 general   question is to find
 sequences $\{d\mu_k =  e^{- S_k(g)}
\dcal_k g\}$ of measures on $\bcal_k$ which are independent of the choice of
the basis $\{s_j(z)\}$, which vary in a simple way under the change of the reference point $\omega_0\in\kcalomega$ and have good asymptotic properties
as $k \to \infty$. It would be particularly interesting to construct a sequence $\{d\mu_k\}$ which tends to Liouville theory measure on metrics of fixed area $\int_M \omega_0$,
although that is not the aim of the present article.

The sequence of measures we study in this article are the  heat kernel  measures
 \begin{equation} \label{HEAT} d\mu_k^t(P): = p_k(t, I, P) dV(P), \end{equation} 
 where $dV(P)$ is Haar measure,  $p_k(t, P_1, P_2)$ is the heat kernel of the symmetric space $\pcal_{N_k}$ and $I$ is the identity matrix. Under the matrix-metric identification $\bcal_k \simeq \pcal_{N_k}$ the identity matrix corresponds to the background metric $\omega_{\phi_I}$ and the heat kernel measure \eqref{HEAT}  is transported to $\bcal_k$. The measure is invariant under the action of the unitary group $U(N_k)$. Hence it is invariant of the choice of the basis of sections used to identify metrics and matrices.   Then \eqref{HEAT} is the probability measure on $\bcal_k$ induced by 
 Brownian motion on $\pcal_{N_k}$ starting at the identity $I$ and continuing up to time $t$.
The heat kernel measure is almost canonical, the  only
choices being the time $t$ and the  background metric $\omega_{\phi_I}$ used to make
the identification and to start the Brownian motion. The purpose of this article is to study the behavior of the heat
kernel measure \eqref{HEAT} on $\bcal_k$ as $k \to \infty$. The main geometric 
quantities we study are the {\it area statistics} \begin{equation} \label{AREA} X_U(\omega) = \int_U \omega \end{equation}
measuring the area of an open set $U \subset M$ with respect to the random area
form $\omega \in \bcal_k$. We determine the means and variances of these random
variables and their smooth analogues  $X_{f} (\omega)= \int_M f \omega $ 
with $f \in C^{\infty}(M)$  in various regimes, e.\ g.\ when the time  $t = t_k$ is allowed to vary with $k$. The calculations  are valid for any choice of
background metric and the dependence on the background metric is simple
and explicit.

The heat kernel measure \eqref{HEAT}  is $U(N_k)$-invariant in the $P$
 variable.  Such invariant measures have generic form
$d\mu_{\bcal_k}(P):=\mathcal F_{\bcal_k}(e^{\lambda} )d\mu_{\rm Haar}(P),
$ where $ \mathcal F_{\bcal_k}(e^{\lambda})$ is a function of the eigenvalues of $P$. It
 was shown in  \cite{FKZ2} that the eigenvalue density
 $\fcal_{\bcal_k}(e^{\lambda})$ induces a function $\fcal_{k, 2}(\nu_1,
\nu_2)$ on $\R_+^2$, so that the 2-point correlation function has the form,
\begin{equation}
\label{final2p} K_{2, k} (z_1, z_2): = 
\E_k\, \phi_P(z_1)\phi_P(z_2)=\phi_I(z_1)\phi_I(z_2)+\frac1{k^2} I_{2,k}(\rho),
\end{equation}
 where \begin{equation}\label{rhointro} \rho(z_1,z_2) = \frac{|B_k(z_1, z_2)|^2}{B_k(z_1, z_1) B_k(z_2, z_2)}  \end{equation} is an important invariant of the \szego
kernel  $B_k(z_1, z_2)$  of the background metric, known as the Berezin kernel.
Thus, $I_{2,k}(z_1, z_2)$  is the bi-potential of the variance of the area forms (or \kahler metrics in higher dimensions) relative to the exterior tensor product $\omega_0 \boxtimes \omega_0$,
\begin{equation}\label{vc2}
\Var\big(\omega_{\phi}\big) =  \E\big(\omega_{\phi}
\boxtimes \omega_{\phi}
\big) -
\E\big(\omega_{\phi}\big)\boxtimes \E
\big(\omega_{\phi}\big) = \E\big(\omega_{\phi}
\boxtimes \omega_{\phi}
\big) -
\omega_{0}\boxtimes
\omega_{0},
\end{equation} 
in the sense that
\begin{equation}\label{varcur2}{\bf Var}\big(\omega_{\phi}\big)=\frac1{k^2}
 (i\ddbar)_z\,(i\ddbar)_w \,I_{2, k} (z,w),
\;.\end{equation} 
The general formula for $I_{2,k}(\rho)$ for any $U(N_k)$-invariant measure is,
 \begin{equation} 
 \label{I2}
 I_{2,k}( \rho)  = 
\frac1{2} \int_{\R_+^2} \int_0^\pi \log (\nu_1^2 \cos^2 \beta + \nu_2^2 \sin^2 \beta)
 \log \frac{A
+ \sqrt{A^2 - B^2}}{2}\, \fcal_{2, k}(\nu) \sin\beta\,d\beta d\nu_1 d \nu_2, \end{equation}
with
$$\left\{ \begin{array}{l}
A =
(\nu_1^2 \cos^2 \beta + \nu_2^2 \sin^2 \beta) \rho+(\nu_1^2 \sin^2 \beta + \nu_2^2 \cos^2 \beta) (1 - \rho),\\ \\
 B = 
2(\nu_1^2 - \nu_2^2)\sqrt{\rho (1 - \rho)} \cos \beta\sin \beta .
\end{array} \right.. $$
The transform
$\fcal_{\bcal_k}(e^{\lambda}) \to \fcal_{2, k}(\nu)$  is very  difficult to evaluate, and
we do not know how to do so directly
even for the heat kernel measure. 
The first term of \eqref{final2p} is the potential of $\omega_{\phi_I}(z_1) \omega_{\phi_I}(z_2)$
where $\omega_{\phi_I}$ is the background metric, and the second term $I_{2,k}(\rho)$
is the correction to this term, which we call the variance term. The key point
is that $I_{2, k}(\rho)$ is a function only of the variable $\rho$.
This  result defnes a  transform $$\fcal_{\bcal_k}(e^{\lambda}) 
\to I_{2, k}(\rho)$$ from  eigenvalue densities to  variance  terms depending only
on $\rho$. It would
be interesting to know if this transform is invertible in some sense, so that one can
construct $U(N_k)$-invariant measures with prescribed pair correlation functions.
In this article we calculate $I_{2,k}(\rho)$ when $\fcal_{\bcal_k}(e^{\lambda})$ comes
from the heat kernel measure, by a different method  (also
used in \cite{FKZ2}).

\subsection{\label{MAIN} Main results}

In the case of heat kernel measures we  calculate
the pair correlation function explicitly not just for Riemann surfaces, but
for general projective \kahler manifolds  (see \S \ref{twopoint}). 
The calculations give an explicit formula for the variance term of the 2-point function $I_{2, k} (t, \rho)$: 
\begin{eqnarray}\label{MAINintro}
\p_\rho I_{2, k}(t, \rho)
=\frac{2t}\rho-\frac{e^{-t/2}}{\sqrt{2\pi t}}\frac{\sqrt{1-\rho}}{\rho}\int_{-\infty}^\infty d\lambda\, \frac{\,e^{-\frac1{2t}\lambda^2}\cosh\lambda}{\sqrt{\coth^2\lambda-\rho}}\log\frac{\sqrt{\coth^2\lambda-\rho}+\sqrt{1-\rho}}{\sqrt{\coth^2\lambda-\rho}-\sqrt{1-\rho}}.
\end{eqnarray}
We do not integrate the result because $I_{2,k}(t, \rho)$ is the expected ``bi-potential''
and the fluctuations of the  metric are obtained by differentiating it. An important aspect of \eqref{MAINintro} is that
the expression has no  $k$-dependence, except for the variable $\rho$ \eqref{rhointro}, which has the form $e^{- k D(z_1,z_2)}$ where $D(z_1,z_2)$ is the diastasis (an analog of distance-squared function for \kahler manifolds) between the points, with respect to the background metric, see \S \ref{BERGSECT}, Eq.\ \eqref{Szego}.

We consider several limits of this joint formula in \S \ref{12SECT}.  From the 
geometric viewpoint,  the most
natural scaling of the time variable is  $t_k = \epsilon_k^{-2} t$ so that the excursion
distance of the Brownian motion  in $\pcal_{N_k}$ at time $t_k$ is essentially distance
$t$ in the Mabuchi metric on $\kcalomega$. 

Similarly to \cite{ShZ, ShZ2}, we  also study the large $k$ asymptotics both in the
unscaled and scaled regimes. The scaling limit is common in related problems
in the physics of $N$ particles where one lets the number $N \to \infty$ and
the volume of the surface $V \to \infty$ such that $\frac{N}{V}$  tends to a limiting
density (see e.g. \cite{CLW} for a similar scaling in the quantum Hall effect). The natural length scale for  metrics in the Bergman space $\bcal_k$   is $\frac{1}{\sqrt{k}}$. We
consider pairs $(z_1, z_2) \in M \times M $ to be close to the diagonal if
$d(z_1, z_2) \leq \frac{\log k}{\sqrt{k}}$,  and to be  `off-diagonal' if  $d(z_1,z_2) \geq C \frac{\log k}{\sqrt{k}}$, where  $d(z_1,z_2)$ is the distance between points relative to the background
metric.
In the scaling limit we consider the asymptotics of  $I_{2, k}(t, \rho)$ for
pairs of points of the form $(z, z + \frac{u}{\sqrt{k}})$ with $|u| \leq C \log k$, in which case $\rho \simeq e^{- |u|^2}$.  The scaling asymptotics combined with the 
time scaling $t_k \to \infty$ has a limit correlation function with  a logarithmic singularity along the diagonal $z_1 = z_2$ (where
$\rho = 1$). The  variance $ (i\ddbar)_{z_1}\,(i\ddbar)_{z_2} I_{2, k}(t, \rho)$ of the \kahler
metric then has a $\delta(z_1 -z_2)$
singularity  along the diagonal. In fact, the  scaling limit  correlation function
turns out to be identical to that for zeros of random holomorphic sections determined in   \cite{ShZ} (Lemma 3.7).

   When $t  \in \R_+$  is finite and fixed, then $I_{2, k}(t, \rho)$
is smooth at $\rho = 1$ (see \eqref{SMOOTH}), and has a convergent expansion
$ (i\ddbar)_{z_1}\,(i\ddbar)_{z_2} I_{2, k}(t, \rho) \sim |z_1-z_2|^{2}$ and 
 there exist coefficients $a_n(t)$ so that 
\begin{equation}\label{TAYLOR} I_{2, k}(t, \rho) \ =\ 
\sum_{n=0}^\infty a_n(t) \rho^n.\end{equation}
Off the diagonal, $\rho \to 0$ and we can obtain the
asymptotics by Taylor expanding the amplitude of the integral \eqref{MAINintro}.
The first  term  $\frac{2t}{\rho}$ is singular as $\rho \to 0$. But in \S \ref{FIXEDTSECT}, resp. in
\S \ref{tinfty}, it is shown that the  $\frac{2 t}{\rho}$ `singularity' cancels in the
sum of the two terms. Hence the variance of the potential is exponentially
decreasing off the diagonal.

\subsection{\label{ZEROSINTRO} Comparison with \cite{ShZ,ShZ2}}

In \S \ref{tinfty} we  first let $t \to \infty$ and then let $k \to \infty$. It turns
out that in this limit,  the random metrics
we obtain are identical with random zero sets of holomorphic sections of the $k$th
power $L^k$ of the line bundle with Chern class $[\omega_0]$. As explained in \S \ref{BOUNDARY}, as $t \to \infty$ the
mass of the heat kernel concentrates on the ideal boundary of the symmetric
space, where the metrics correspond to the zero sets of holomorphic sections. On a Riemann surface,
the random metrics become normalized sums of delta functions on random point configurations with $k$ points. We verify that the pair correlation function of random \kahler
metrics in $\bcal_k$ in the limit $t \to \infty$ is given by the same formula as in \cite{ShZ} for correlations
between zeros of random sections. For large $t$ (depending on $k$), the random
metric is close to such a point configuration measure.

We now explain this similarity in more detail so that the notation and purpose of
this article are synchronized with those of \cite{ShZ, ShZ2}. The model of Gaussian
random holomorphic sections and the results are described in more detail
in \S \ref{RANDOMZEROS}.
 In those articles, the role of the area
form (in complex dimension one) is played by the zero set measure $Z_s$ of a random
section $s \in H^0(M, L^k)$, which defines a ``singular metric''. Hence it is not surprising
that there are relations between random smooth metrics and random zero sets.

  In  \cite{ShZ,ShZ2}, the zero current is given by  $Z_s = \frac{i}{\pi} \ddbar \log |s|^2$ and,
  analogously to \eqref{vc2}  the  {\it variance current\/} of zeros is defined by,
\begin{equation}\label{vc}
\var\big(Z_{s^k}\big) =  \E\big(Z_{s^k}
\boxtimes Z_{s^k}
\big) -
\E\big(Z_{s^k}\big)\boxtimes \E
\big(Z_{s^k}\big).
\end{equation} 
In \cite{ShZ}, it was shown that the bi-potential  $I_{2,k}$ of the variance\footnote{In the notations of \cite{ShZ}, $I_{2, k}$ corresponds to $4\pi^2Q_k$.} \eqref{vc} in the sense of  \eqref{varcur2}  is given by dilogarithm
\begin{equation}
\label{QN} I_{2, k}(z,w)= -
\int_0^{\rho} \frac{\log(1-s)}{s}\,ds\;.\end{equation}
In \S \ref{tinfty} we show that the $t \to \infty$ limit of the heat kernel ensemble
 gives precisely the same bi-potential, hence the same variance.

For zeros of random holomorphic sections over  a Riemann surface,  
the area statistic \eqref{AREA} with respect to $Z_s$ counts the number of
zeros of $s$ in $U$ and is denoted by $\ncal_U$ in \cite{ShZ}. 
it is shown there that
$$\var\big(\ncal_U\big) =- \int_{\d U\times
\d U}\dbar_{z_1}\dbar_{\bz_2} Q_{k} (z_1,z_2)\;.$$ and  that the number
variance for zeros has the asymptotics,
$$\var\big(\ncal_U\big) =
k^{1/2} \,\left[\nu_{1} \,\vol_{1}(\d U)
 +O(k^{-\frac 12 +\ep})\right]\;,$$ where   $\nu_{1} =
\frac{\zeta(3/2)}{8\pi^{3/2}}$. Thus, the same formula is valid in the limit $t \to \infty$
of heat kernel random metrics. Similar formulae for variances of $X_U$ and $X_{\phi}$ for random metrics can 
be derived from the explicit formula  \eqref{MAINintro} for $I_{2, k}(t, \rho)$ for any $t$ in the heat kernel measure ensembles. The details are lengthy and will be presented elsewhere.

\subsection{Asymptotic central limit theorem}

It is  shown in \cite{ShZ2} that the fluctuations of the
smooth linear statistics $X_{f}$
in the case of random zeros
tend to a Gaussian field with variance $\ncal(0, \sqrt{\kappa_1}\, \|\ddbar f \|_2)$, see \S \ref{AN} for more details. This
result holds when $t = \infty$ for heat kernel random metrics. The analogous results
for \eqref{AREA} do not seem to be known at present.

 It is very plausible that
for general times $t_k$ the smooth linear statistics $X_{f}$ with respect to the heat
kernel measure $d\mu^{t_k}_{ k}$ are also asymptotically normal, with a related variance.
Such an asymptotic central limit theorem would be a concrete measure of how
closely  heat
kernel random metrics compare to random singular metrics defined by 
point processes of random zero sets,  or to those  studied  in \cite{TBAZW,AHM,Ber,CLW},
where the fluctuations of linear statistics of eigenvalues tend to a Gaussian free field.
We plan to investigate the asymptotic normality of fluctuations of random metrics
 in future work.  These asymptotic normality results would also give a comparison
 of   heat kernel random metrics  to  Liouville random metric. The 
 fluctuations in the latter  case are of the type of
Gaussian multiplicative chaos.

\subsection{Discussion}  Heat kernel random metrics are the metrics obtained
by starting at the background metric $\omega_{\phi_I}$ and following a Brownian
motion on $\pcal_{N_k}$ for time $t$. 

The formula \eqref{MAINintro} reflects the geometry
of Brownian motion of the non-positively curved symmetric space $\pcal_{N_k}$,
which is very different from that of Euclidean space, see \S \ref{HEATSECT} 
for background.  First, due to non-isotropic nature of the Haar measure, the heat measure is concentrated along
the $SU(N_k)$-orbit of a distinguished element $\delta_{N_k}$, the half-sum of the positive roots. Second, in the radial  direction
the heat kernel measure concentrates in a kind of  annulus of radius $t$ around the $SU(N_k)$-orbit of $\delta_{N_k}$, see \S \ref{BRM}.  This is why, as  $t \to \infty$, the heat kernel measure becomes supported on the
ideal boundary $\partial_{\infty} \pcal_{N_k}$. Its $SU(N_k)$ invariance
implies that the boundary measure
is  the same as the measure on zero sets of holomorphic sections used in 
\cite{ShZ}.

As mentioned above, `heat kernel' random metrics are not like the
random metrics of Liouville quantum gravity.  On a very heuristic level,  one may
understand the difference  by thinking of $SU(N_k)$-invariance
as a discretization of invariance under the group $SDiff_{\omega_0}(M)$ of
symplectic diffeomorphisms of the background symplectic form $(M, \omega_0)$. This
is far from the invariance group of Liouville theory. It
is plausible that the  only $SDiff_{\omega_0}(M)$-invariant probability measure
on $\kcalomega$  is $\delta_{\omega_0}$.

This concentration of measure phenomenon, that heat kernel random 
metrics concentrate on the background,  is
the same phenomenon that occurs for random zeros in \cite{ShZ,ShZ2} and
for the quantum Hall point process in \cite{CLW}. Random zero sets of $N$ zeros or
random point configurations of $N$ electrons also concentrate at the
background metric as $N \to \infty$. In these cases it is customary to dilate the space to
obtain configurations of constant density. In the case of random metrics, if one dilates  small
balls of radius $\frac{1}{\sqrt{k}}$ around a point $z_0$ by the factor $\sqrt{k}$, then
the  random metrics become metrics on $\C$ and the 2-point  correlation function 
of the limit measure acquires the new term $\delta(z-w)$.

\subsection*{Acknowledgments} SK is partially supported by the Max Delbr\"uck
prize  for junior researchers at the University of Cologne, the Humboldt postdoctoral fellowship and  the grants NSh-1500.2014.2 and RFBR 15-01-04217. SZ is partially supported by NSF grant DMS-1206527 and the Bergman trust. We gratefully acknowledge support from the Simons Center for Geometry and Physics, Stony Brook University where this paper was completed.

\section{\label{BERGSECT} Bergman metrics}

We briefly review the properties of Bergman metrics, referring to \cite{PS,FKZ1}
for further background. As above, $(M, \omega,J)$ can be any compact \kahler manifold
with integral \kahler form. The Riemannian metric is $g(X, Y) = \omega(J X, Y)$.  The simplest case is that of a Riemann surface, where
a \kahler class is the same as a conformal class of metrics with fixed area. 
Instead of parametrizing metrics $g = e^u g_0$ by the Liouville field $u$ we
parametrize them by the \kahler potential $\phi$, i.e. $\omega_{\phi}= 
\omega_0 + i \ddbar \phi$, relative to the reference \kahler form $\omega_0$.

Bergman metrics of degree $k$ are special \kahler metrics induced by holomorphic embeddings 
$$\iota_{s}(z)  = [s_1, \dots, s_{N_k}] :M \to \CP^{N_k-1}$$
of $M$ into complex projective space. Here, $\{s_j\}$ is a basis of the space $H^0(M, L^k)$ of holomorphic sections of powers $L^k$ of
an ample line bundle $L \to M$ with first Chern class $c_1(L) = [\omega_0]$. Also $N_k = \dim H^0(M, L^k)$.  Given a reference basis $\{s_j\}$ one obtains all others by applying an element $A \in GL(N_k, \C)$ to it ${s^A}_j=\sum A_{jl}s_l$ and induces the embedding $$\iota_{s_A}: M \to \CP^{N_k-1},
\;\; \iota_{s_A} = A \circ \iota_{s}. $$The associated
Bergman metric is then,
\begin{equation} \label{BergMetDef} \iota_{s_A}^* \omega_{FS} = \frac{1}{k} i \ddbar \log \sum_{j = 1}^{N_k} |{s^A}_j(z)|^2. \end{equation}
Since $U(N_k)$ is the isometry group of $\omega_{FS}$, the space of metrics is the quotient symmetric
space $\pcal_{N_k} = GL(N_k,\C)/U(N_k)$. With no loss of generality one may restrict to $SL(N_k, \C)$ and obtain the
quotient $SL(N_k, \C)/SU(N_k)$. 

 We choose a basis of sections $\{s_i(z)\}=\{s_1(z),...,s_{N_k}(z)\}$
 of $H^0(M, L^k)$ which is orthonormal with respect to the reference (background) metric $h_0^k$ on $L^k$ and the corresponding \kahler metric $\omega_0=-\frac1k i \ddbar \log h_0^k$ on $M$
\begin{equation}
\label{ortho}
\frac1V\int_M \bs_i(z)s_j(z)h_0^k\,\frac{\omega_0^n}{n!}=\delta_{ij},
\end{equation}
where $n = \dim M$. The Bergman kernel of the background metric is the kernel of the orthogonal
projection onto $H^0(M, L^k)$ with respect to the inner product above, and is
given by
\begin{equation} \label{BKer}B_k(z_1, z_2) = \sum_{j = 1}^{N_k} s_j(z_1) \bs_j(z_2) \end{equation}
Given a positive Hermitian matrix $P = P_{ij}$ the associated Bergman metric is,
\begin{equation}
\label{bergm}
\omega_{a\bb}(z)=\frac1k\p_a\bp_{\bb}\log \bs_i(z)P_{ij}s_j(z).
\end{equation}
In terms of $A \in GL(N_k, \C)$ above,  $P=A^{\dagger}A$. We introduce the Bergman potential as follows
\begin{equation}
\label{KP}
\phi_P=\frac1k\log \bs_i(z)P_{ij}s_j(z)=\frac{1}{k} \log | \langle e^{\Lambda}
U s(z), U s(z) \rangle|^2.
\end{equation}

 A key property is that  $\kcalomega  = \overline{\bigcup_k \bcal_k}$, i.e. the full space of metrics 
in a fixed \kahler class  is the closure
of the set of Bergman metrics. Hence $\kcalomega$ is well approximated by $\bcal_k$ for large $k$, and there are now
many results showing that it is well approximated in  much stronger geometric ways.
This approximation problem was raised by S.~T.~Yau in \cite{Y}, see \cite{PS} for background.

\subsection{ Berezin kernel}
     The key invariant is the Berezin kernel \eqref{rhointro}, given in 
the above notation by

\begin{equation}
\label{Szego} \rho =
% \cos^2\theta_{12}(z_1,z_2)=
\frac{|\langle s(z_1),s(z_2)\rangle|^2}{|s(z_1)|^2|s(z_2)|^2},
\end{equation}
or  in terms of the Bergman kernel
\begin{equation}\label{PN} \rho = P^2_k(z_1,z_2):=
\frac{|B_k(z_1,z_2)|^2}{B_k(z_1,z_1)B_k(z_2,z_2)}\;.\end{equation}

\subsection{Matrix-metric correspondence} The matrix-metric correspondence Eq.\ \eqref{bergm} uses a choice of basis $\{s_j\}$ of $H^0(M, L^k)$. Any natural measure on $\bcal_k$ must be independent of the choice of this basis. We pause to  describe such natural
measures. 

Any \kahler metric $\omega = \omega_0 + i \ddbar \phi$ in $\kcalomega$ induces
an inner product ${\rm Hilb}_k(\phi)$ on $H^0(M, L^k)$ by the rule
$$\langle s_1, s_2 \rangle_{{\rm Hilb}_k(\phi)} = \int_M \bs_1(\bz) s_2(z) h^k \frac{\omega^n}{n!}.$$
Given a background inner product $G_0 = {\rm Hilb}_k(\phi_0)$, any
other inner product has the form $\langle s_1, s_2 \rangle_G = \langle P_G s_1, s_2 \rangle_{G_0}$ where $P_G$ is a positive Hermitian operator on $H^0(M, L^k)$ with
respect to $G_0$. It has a well-defined  polar decomposition $e^{\Lambda} U$
where $U \in U(G_0)$ is unitary with respect to $G_0$.  Its eigenvalues are encoded by the diagonal matrix $\Lambda_G$  and its  eigenvectors are encoded by $U$.

In making calculations, we need to parametrize such positive Hermitian operators
by positive Hermitian matrices, which requires a choice of a $G_0$-orthonormal
basis of $H^0(M, L^k)$. Any measure intrinsically defined on the space of positive
Hermitian operators will be independent of the choice of basis. Haar measure and
the heat kernel are examples of such measures.

\section{\label{HEATSECT} Heat kernel}

In this section we review the heat kernel on $\pcal_{N}$\footnote{In this section we adopt shorthand notation $N=N_k$.}. Bergman metrics
are unchanged if the positive Hermitian matrix $P$ is multiplied by a scalar, so 
we may normalize $P$ so that $\det P =1$. Then $\pcal_{N} = G/K$ where  $G = SL(N, \C)$
and $K = SU(N)$.
We denote by $\mathfrak k$
the Lie algebra of the maximal compact subgroup $K \subset G$ and let $\mathfrak g = \mathfrak k \oplus \mathfrak p$.
Let $\mathfrak a$ be a maximal abelian subspace of $\mathfrak p$ and let $\ell = \dim \mathfrak a$. The set of positive roots is denoted
by $R_+$.   
The roots are $e_i -
e_j$, and the positive roots satisfy $i < j$ and have multiplicity one.  For $SL(N,\C)/SU(N)$ the half sum of the positive roots is the element
$\delta_N = (-\frac{N-1}2,  -\frac{N-3}2
\dots,\frac{ N-1}2)$. For background, see \cite{K,H}.

 We refer to the matrix decomposition $P=U^\dagger e^\Lambda U$ for $\Lambda={\rm diag}(\lambda_1,\ldots,\lambda_N)$, and $U \in U(N)$ as
 `polar coordinates' on $\pcal_N$, 
where real numbers $\lambda_j\in(-\infty,+\infty)$ correspond to the Cartan elements of $SL(N,\C)$.

 The CK (Cartan-Killing) metric is  given by
\begin{equation} \label{CK}
ds^2={\rm Tr}(P^{-1}dP)^2
\end{equation}
for $P\in GL(N,\mathbb C)/U(N)$. This metric is bi-invariant  under the action of $GL(N,\mathbb C)$.

   The associated volume form  $dV$  on the symmetric space $SL(N, \C)/SU(N)$ of positive Hermitian matrices with  $\det P = 1$
 is the bi-invariant Haar measure,
\begin{equation}
\label{eq3}
dV=\delta\left(\sum_{j=1}^N\lambda_ j\right)\Delta^{ 2  }(e^\lambda)\prod_{j =1}^Nd\lambda_ j\cdot\frac{[dU]}{[dU_{U(1)^N}]},
\end{equation}
where $[dU]$ is the standard Haar measure on unitary group.

\subsection{Heat kernel measure on $SL(N,\C)/SU(N)$}

Following Gangolli \cite{gagnolli} (Proposition 3.2; see also \cite{AO}, section
2), the heat kernel on $SL(N,\mathbb C)/SU(N)$
with respect to the standard CK (Cartan-Killing) metric  is given in `polar coordinates'
$(\lambda, U)$ on $\pcal_{N}$ by
\begin{equation}
\label{eq1}
d\mu_t=g_t(\lambda) dV= C(t,N)\frac{\Delta(\lambda)}{\Delta(e^\lambda)}e^{-\frac1{4t}\sum_{j=1}^{N}\lambda_j ^2} dV.
\end{equation}
Here,  $\Delta(\lambda)=\prod_{i<j}(\lambda_ j-\lambda_i)$ is the standard Vandermonde determinant.

The normalization constant $C(t,N)$ in \eqref{eq1} is fixed by the condition that $\mu_t$ is the probability measure $\int d\mu_t=1$,
\begin{equation}
\label{C}
C(t,N)=\frac{\sqrt N}{2\pi(\sqrt{4\pi t})^{N^2-1}}e^{-\frac t{12}N(N^2-1)}.
\end{equation}
In deriving this we use the volume of the unitary group
$$
{\rm Vol}\,U(N)=(2\pi)^{N(N+1)/2}/\prod_{j=1}^Nj!,
$$
see e.g. \cite{M}.
The factor $e^{-\frac t{12}N(N^2-1)}$ is $e^{- t ||\delta_N||^2}$ and arises because
$||\delta_N||^2 $ is the bottom of the spectrum of the Laplacian.
Putting \eqref{eq3} and \eqref{C} together, we get the following expression 
\begin{eqnarray}\label{mutdef}
d\mu_t=\frac{\sqrt N}{2\pi(\sqrt{4\pi t})^{N^2-1}}e^{-\frac t{12}N(N^2-1)}\delta\left(\sum_{j=1}^N\lambda_j\right)\Delta(\lambda)
\Delta(e^\lambda)e^{-\frac1{4t}\sum_{j=1}^{N}\lambda_j^2}
\prod_{j=1}^Nd\lambda_ j\cdot\frac{[dU]}{[dU_{U(1)^N}]}
\end{eqnarray}
for the heat kernel measure on $SL(N,\mathbb C)/SU(N)$ with respect to the
CK metric.

\subsection{\label{BRM} Geometry of the heat kernel and Brownian motion}

In \cite{AS} it is proved that the mass of the heat kernel concentrates along the exponential image of the  $U(N)$-orbit of the $\delta_N$-axis in a small
annulus centered at $2 |\delta_N| t $.

If we write $H = \rm{diag}(\lambda)$, then the Gaussian factor $t^{- (N^2-1)/2} e^{-\frac{||H||^2}{4t}}$ is similar to the heat kernel of Euclidean space. But this Gaussian
factor must compete with the exponential volume growth factor $
\Delta(e^\lambda)$ and the factor $e^{-\frac t{12}N(N^2-1)}$ due to the existence
of a spectral gap for $\Delta$. The well-known factor $\Delta(\lambda)$ pushes
the eigenvalues of $\log P$ apart. The factor  $J(H)$ is bounded by
$e^{2 \langle \delta_N, \vec \lambda \rangle}$ and a simplified expression
for the heat kernel is $e^{- t |\delta_N|^2 + \langle \lambda, \rho_N \rangle
- \frac{|\lambda|^2}{2t} }.$ The maximum of the exponent occurs when $\vec \lambda = 2t \delta_N$.

Following \cite{AS}, let $\gamma(t)$ be a positive function with $\sqrt t \gamma(t) \to \infty$ as $t \to \infty$, and let
$R(t)$ be a positive function such that $R(t)/\sqrt t \to \infty$. Consider the annulus
$$A(2 |\delta_N| t - R(t), 2 |\delta_N| t + R(t)): = \{H: 2 |\delta_N| t - R(t) \leq |H| \leq 2 |\delta_N| t + R(t)\} \subset \mathfrak a $$
and consider the solid cone 
$$\Gamma(t) = \mbox{solid cone around the}\; \delta_N\; \mbox{axis of angle }\; \gamma(t), $$
and let
$$\Omega(t) = A(2 |\delta_N| t - R(t), 2 |\rho| t + R(t)) \cap \Gamma(t). $$
Then, according to Theorem 1 of \cite{AS},
\begin{equation} \int_{U(N) \exp \Omega(t) U(N) } d\mu_t \to 1, \;\; t \to \infty. \end{equation}

A Brownian motion proof of this result is given in \cite{Bab}. It shows that
as $t \to \infty$ the mass of $\mu_t$ moves off to a component of the ideal boundary (at
infinity) of $\pcal_{N}$. In \S \ref{BOUNDARY} we discuss this boundary.

\subsection{\label{SCALINGSECT} Scaling and dilation}

In the large $k$ limit, the symmetric space metric on $\bcal_k$, when properly
scaled, tends to the Mabuchi metric $g_M$ on $\kcalomega$ (see \cite{CS}). The Mabuchi
distance function is induced by the Riemannian metric on $\kcalomega$ 
defined by $||\delta \phi||^2_{\phi_0} = \int_M (\delta \phi)^2 \omega_{\phi}^n/n!
$ where $\omega_{\phi} = \omega_0 + i \ddbar \phi$. We
refer to \cite{PS} for background.
For all $k$, $\bcal_k \subset \kcalomega$. If we  rescale
the CK  metric  $g_{CK,k}$ \eqref{CK}  as $ g_k =  \epsilon_k^2 g_{CK, k}$, with
  $\epsilon_k = k^{-1} N_k^{-1/2}$, then $g_k \to g_M$ on $T \bcal_k$.
Thus, a ball of radius one with respect to the usual CK metric $g_{CK,k}$ has radius
approximately $\epsilon_k$ with respect to the Mabuchi distance. 
It is obviously desirable to consider the heat kernel measures for this rescaled sequence
of metrics.

If we rescale the CK metric to $g_k =\epsilon_k^2 g_{CK, k}$ the corresponding
Laplacian scales as $\Delta_{g_k} \to \epsilon_k^{-2} \Delta_{g_{CK, k}}$. It follows
that the heat operator scales as
$$\exp t \Delta_{g_k} = \exp t \epsilon_k^{-2} \Delta_{g_{CK, k}}. $$
In effect, it is only the time that is rescaled and the rescaled heat kernel
is $p_k(\epsilon_k^{-2} t, I, P). $

\section{\label{12SECT} One and two point correlation functions of random metrics}

In this section, we calculate the one and two point functions of the
random  \kahler potential; in  the introduction, the latter was stated to be  \eqref{MAINintro}. We use the notation $\E = \E_k$ for the  expectation, which at
the beginning  could be with respect
to any $U(N_k)$-invariant measure and then specializes to the heat kernel
measures. For simplicity of notation we often abbreviate $N_k$ by $N$
and drop the explicit $k$-dependence.

As mentioned in the introduction, the one and two-point functions are the data required
to study the mean and variance of the area random variables $X_U$. Evidently,
$$\E_k X_U = \int_U \E_k\, \omega, \;\; \Var(X_U) = \int_{U\times U} \E_k \left[\omega(z_1) \omega(z_2) \right] - \int_{U\times U}\, \E_k [\omega(z_1)] \E_k\, [\omega(z_2)] . $$
The integrands are the one- and two-point correlation functions.

\subsection{One point function of the K\"ahler potential}

The argument here follows \cite{FKZ2}. Using the integral representation of the logarithm 
\begin{equation}
\label{log}
\log\alpha=\frac1\tau+\gamma-\int_0^\infty x^{\tau-1}e^{-\alpha x}dx+\mathcal O(\tau), 
\end{equation}
where $\gamma$ is the Euler constant, we can rewrite the expectation value of the \kahler potential $\phi_P$ \eqref{KP} relative to the  \kahler potential $\phi_I$ of the background Bergman metric as
\begin{align}
\label{1point}\nonumber
\E_k[\phi_P(z)-\phi_I(z)]&=\E_k\left[\frac1k\log\frac{ \bar s(z) U^\dagger e^\Lambda U s(z)}{|s(z)|^2}\right]=\\
&=\frac1k\lim_{\tau\rightarrow0}\;\;\frac1\tau+\gamma-\int_0^\infty x^{\tau-1}dx\int_{\pcal_{N}}
e^{-{\rm Tr}\,e^\Lambda U\Psi U^\dagger}d\mu_t,
\end{align}
where introduced the matrix $\Psi_{jl}=xs_j(z) \bar s_l(z)/|s(z)|^2$. The integration over the unitary group can be carried out  using Harish-Chandra-Itzykson-Zuber formula, see e.\ g.\ \cite{M,ZZ} for background. Namely, for any two Hermitian matrices $A$ and $B$ with eigenvalues $a_j$ and $b_j$
\begin{equation}
\label{HCIZ}
\int_{U(N)}\frac{[dU]}{{\rm Vol}\, U(N)} \exp\left(\mu{\rm Tr} AUBU^\dagger\right)=\left(\prod_{p=1}^{N-1}p!\right)
\mu^{-N(N-1)/2}\frac{\det\left(e^{\mu a_j b_l}\right)_{1\leq j,l\leq N}}{\Delta(a)\Delta(b)}.
\end{equation}
It is not hard to check that this expression is well defined even if some of the eigenvalues coincide. This is the case for the matrix $\Psi$ which has $N-1$ zero eigenvalues and one non-zero eigenvalue equal to $x$. Hence the integral on the right hand side of \eqref{1point} is $z$-independent and we immediately conclude that the expectation value of the Bergman metric is equal to the background Bergman metric
$$
\E_k[\omega_{a\bar b}]=\omega_{\phi_I, a\bar b}
$$
In fact, this is true for any eigenvalue-type measure \cite{FKZ2}, since the HCIZ integral depends on eigenvalues only. Note, that so far no assumptions on $k$ have been made in this calculation. Considering now the limit of  $k$ large, we invoke the Bergman kernel expansion \cite{FKZ2} to show that the background Bergman metric $\omega_{\phi_I}$ tends to the reference \kahler metric, 
$$
\omega_{\phi_I, a\bar b}= \omega_{0\, a\bar b}+\mathcal O(1/k).
$$
Now we would like to consider the variance of $X_U$ from its mean $\omega_0(U)$. 

\subsection{\label{twopoint}The two point function} In this section we prove
the formula \eqref{MAINintro} for the two point function, and then discuss its
asymptotics in various regimes. In the terminology of \cite{ShZ} we are finding
a `bi-potential' for the variance. Although the calculations of this bi-potential are
completely different from the case of random holomorphic sections in \cite{ShZ},
the final formulae are somewhat similar and when $t \to \infty$ they are identical.

 Recall that the two-point function \eqref{final2p}  of the \kahler potential
 is the sum of the background term $\phi_I(z) \phi_I(w)$ plus the variance term
 $I_{2,k}(\rho)$. Instead of using the formula \eqref{I2} for this term, we take
 the approach of writing
\begin{align}
\label{2point}\nonumber
K_{2, k}(z,w) - \phi_I(z) \phi_I(w) & =\frac1{k^2}\E_k\left[\log\frac{ \bar s (z_1)U^\dagger e^\Lambda U s(z_1)}{|s(z_1)|^2}\log \frac{\bar s(z_2) U^\dagger e^\Lambda U s(z_2)}{|s(z_2)|^2}\right]\\ &=\frac1{k^2}\lim_{\tau_1,\tau_2\rightarrow0}\bigl(I_{2,k}(t,\rho,\tau_1,\tau_2)+\;\rho\mbox{-}{\rm independent\,terms}\bigr) \end{align}
where we do not write down the $\rho$-independent terms, since  ultimately we are interested in the dependence of the correlation function on coordinates, which enter only through $\rho(z_1,z_2)$. Here
\begin{align}\label{2point1} I_{2,k} (t,\rho,\tau_1,\tau_2)=\iint_0^\infty x_1^{\tau_1-1}x_2^{\tau_2-1}dx_1dx_2\int_{\pcal_{N}} e^{-{\rm Tr}\,e^\Lambda U\Phi U^\dagger}d\mu_t.
\end{align}
and  we introduced the matrix $\Phi_{jl}=x_1\frac{s_j(z_1)\bar s_l(z_1)}{|s(z_1)|^2}+ x_2\frac{s_j(z_2)\bar s_l(z_2)}{|s(z_2)|^2}$. It has rank 2 with two non-zero eigenvalues given by
$$
\phi_{1,2}=\frac12\left(x_1+ x_2\pm\sqrt{(1-\rho)(x_1-x_2)^2+\rho(x_1+x_2)^2}\right).
$$
Applying the HCIZ formula (\ref{HCIZ}) to the unitary integration in Eq.\ (\ref{2point1}) we obtain 
$$
(-1)^{N(N-1)/2}\frac{N!(N-1)!}{(\phi_1\phi_2)^{N-2}(\phi_1-\phi_2)}\frac{e^{-\phi_1e^{\lambda_1}-\phi_2e^{\lambda_2}}
}{\prod_{j=2}^N(e^{\lambda_1}-e^{\lambda_j})\prod_{l=3}^{N}(e^{\lambda_2}-e^{\lambda_l})}$$
where we used the fact that the integration measure is symmetric in eigenvalues $\lambda$'s.

Now we use the explicit form of the heat kernel measures $\mu_t$ \eqref{mutdef}.
The integral $I_{2,k} (t,\rho,\tau_1,\tau_2)$ in Eq.(\ref{2point}) with this eigenvalue
measure can be written as\begin{eqnarray}
\nonumber
&&I_{2,k}(t,\rho,\tau_1,\tau_2)=(-1)^{N(N-1)/2}C(t,N)\frac{{\rm Vol}\, U(N)}{(2\pi)^N}N!(N-1)!\iint_0^\infty \frac{x_1^{\tau_1-1}x_2^{\tau_2-1}dx_1 dx_2}{(\phi_1\phi_2)^{N-2}(\phi_1-\phi_2)}\\&&\int_{-\infty}^\infty dy \int \prod_{j=1}^Nd\lambda_j\;\Delta(\lambda)\Delta_{12}(e^\lambda)\exp\left(-\frac1{4t}\sum_{j=1}^{N}\lambda_j^2+iy \sum_{j=1}^N\lambda_j-\phi_1e^{\lambda_1}-\phi_2e^{\lambda_2}\right),\nonumber
\end{eqnarray}
where we defined the partial Vandermonde determinant $\Delta_{12}(e^\lambda)=\prod_{3\leq j<l\leq N}(e^{\lambda_j}-e^{\lambda_l})$, which excludes the first two eigenvalues $e^{\lambda_1}$ and $e^{\lambda_2}$. The $y$-integration enforces the delta-function constraint in the measure \eqref{mutdef}.  

Using antisymmetry of $\Delta(\lambda)$ under exchange of two eigenvalues, $\Delta_{12}(e^\lambda)$ can be replaced by $(-1)^{1+N(N-1)/2}(N-2)!\,e^{\sum_{l=3}^N(l-3)\lambda_l}$ inside the integral, which leads to the further simplification
\begin{eqnarray}
\nonumber\label{int19}
&&I_{2,k}(t,\rho,\tau_1,\tau_2)=-\frac{{\rm Vol}\, U(N)}{(2\pi)^N}C(t,N)N!(N-1)!(N-2)!\iint_0^\infty\frac{x_1^{\tau_1-1}x_2^{\tau_2-1}dx_1dx_2}{(\phi_1\phi_2)^{N-2}(\phi_1-\phi_2)}\cdot\\&&\int_{-\infty}^{\infty} dy \int \prod_{j=1}^Nd\lambda_j\,\label{int21}
\Delta(\lambda)\,e^{\sum_{j=1}^{N}\bigl(-\frac1{4t}\lambda_j^2+iy \lambda_j\bigr)+\sum_{l=3}^N(l-3)\lambda_l-\phi_1e^{\lambda_1}
-\phi_2e^{\lambda_2}},
\end{eqnarray}
Thus after the HCIZ integration we got rid
of most difficult factor $\Delta(e^{\lambda})$ and left with a Gaussian integral with
a polynomial amplitude, except for the
terms $\phi_1 e^{\lambda_1} + \phi_2 e^{\lambda_2}$  in the exponent. Note that the $k$-dependence is entirely in
the variable $\rho$ inside $\phi_1, \phi_2$.

The next step is to calculate the integrals in the second line of \eqref{int19},
\begin{equation} \ical_{2, k} (t, \phi_1, \phi_2) := \int_{-\infty}^{\infty} dy \int \prod_{j=1}^Nd\lambda_j\,\label{int21}
\Delta(\lambda)\,e^{\sum_{j=1}^{N}\bigl(-\frac1{4t}\lambda_j^2+iy \lambda_j\bigr)+\sum_{l=3}^N(l-3)\lambda_l-\phi_1e^{\lambda_1}
-\phi_2e^{\lambda_2}}. \end{equation} 
Our strategy is to Taylor-expand the exponent of \eqref{int21}  in powers of $e^\lambda$, 
$$
e^{-\phi_1e^{\lambda_1}-\phi_2e^{\lambda_2}}=\sum_{m,n=0}^{\infty}\frac{(-\phi_1)^n(-\phi_2)^m}{n!m!}e^{n\lambda_1+m\lambda_2},
$$ then perform gaussian integration in $\lambda$ term-by-term,  and finally re-sum the resulting series, i.\ e.\ un-do the Taylor expansion. We use the identity
\begin{equation}
\label{iden}
\prod_{j=1}^N\left(\int_{-\infty}^{\infty}d\lambda_j\right)\, \Delta(\lambda)\,e^{\sum_{j=1}^N(-\frac1{4t}\lambda_j^2+\mu_j\lambda_j)}
=(2\pi)^{N/2}(2t)^{N^2/2}\Delta(\mu)\,e^{t\sum_{j=1}^N\mu_j^2}
\end{equation}
to compute the eigenvalue integral and get
\begin{align}
\nonumber \label{Ical}
\ical_{2, k} (t, \phi_1, \phi_2)&= \sum_{n,m=0}^{\infty}\frac{(n-m)}{n!m!}(-\phi_1)^n(-\phi_2)^m
\prod_{l=0}^{N-3}(n-l)(m-l)\\&\cdot
\int_{-\infty}^{\infty} dy\,e^{t(n+iy)^2+t(m+iy)^2+t\sum_{l=0}^{N-3}(l+iy)^2}.
\end{align}
Due to the factor $\prod_{l=0}^{N-3}(n-l)(m-l)$,  all terms with $m,n<N-2$ have coefficient zero, so we can shift summation indices $n\rightarrow n-(N-2)$, $m\rightarrow m-(N-2)$. Integrating  over $y$ in \eqref{Ical} and plugging the result back to \eqref{int19} we obtain
\begin{eqnarray}
\nonumber
&&I_{2, k}(t,\rho,\tau_1,\tau_2)=-e^{-t(N-1)^2/N}
\iint_0^\infty\frac{x_1^{\tau_1-1}x_2^{\tau_2-1}dx_1dx_2}{\phi_1-\phi_2}\cdot
\\&&(\phi_1\partial_{\phi_1}-\phi_2\partial_{\phi_2})
\sum_{n,m=0}^{\infty}\frac{(-\phi_1)^n(-\phi_2)^m}{n!m!}\,e^{\frac t2\frac{N-2}N(n+m)^2+\frac t2(n-m)^2+t\frac{(N-1)(N-2)}N(n+m)}.\label{rel1}
\end{eqnarray}
Now we can re-sum the series using the identity
\begin{eqnarray}\nonumber
\frac1{2\pi t}\int_{-\infty}^{\infty}d\lambda_1d\lambda_2\,e^{-\frac1{2t}(\lambda_1^2+\lambda_2^2)-\phi_1
e^{a\lambda_1+\lambda_2}-\phi_2e^{a\lambda_1-\lambda_2}}=
\sum_{n,m=0}^{\infty}\frac{(-\phi_1)^n(-\phi_2)^m}{n!m!}\,
e^{\frac t2a^2(n+m)^2+\frac t2(n-m)^2}
\end{eqnarray}
Replacing the series in (\ref{rel1}) by the integral, and changing variables
 $x_1\rightarrow e^{-a\lambda_1-t\frac{(N-1)(N-2)}N}x_1$, $x_2\to e^{-a\lambda_1-t\frac{(N-1)(N-2)}N}x_2$ makes it possible to carry out the gaussian integration in $\lambda_1$,
 giving
\begin{eqnarray}
\nonumber
I_{2, k}(t,\rho,\tau_1,\tau_2)=-\frac{e^{-t/2}}{\sqrt{2\pi t}}
\iint_0^\infty\frac{x_1^{\tau_1-1}x_2^{\tau_2-1}dx_1dx_2}{\phi_1-\phi_2}(\phi_1\partial_{\phi_1}-\phi_2\partial_{\phi_2})\int_{-\infty}^{\infty}d\lambda\, e^{-\frac1{2t}\lambda^2-\phi_1e^\lambda-\phi_2e^{-\lambda}}
\end{eqnarray}
Interestingly, the factor $e^{- t |\delta_N|^2}$ coming from the spectral gap has
now disappeared from the formula.
It follows from the integral representation Eq. \eqref{log}, that the singular in $\tau_{1,2}$ terms in \eqref{2point} are $\rho$-independent. Therefore after taking the derivative of $I_{2,k}(t,\rho,\tau_1,\tau_2)$ with respect to $\rho$, we can set $\tau_1=\tau_2=0$. Since the measures $\mu_t$ depend on $t$ we henceforth denote the
corresponding term in the two point function by $I_{2,k}(t, \rho)$.
Using
$$
\partial_\rho\phi_{1,2}=\pm\frac{x_1x_2}{\phi_1-\phi_2},\,\,\,\,\,\,\,\,\partial_\rho\frac1{\phi_1-\phi_2}=-\frac{2x_1x_2}{(\phi_1-\phi_2)^3}
$$
we get 
\begin{align}
\nonumber
\p_\rho I_{2,k}(t, \rho)=&
\frac{2e^{-t/2}}{\sqrt{2\pi t}}\iint_0^\infty dx_1 dx_2\int_{-\infty}^{\infty}d\lambda \,e^{-\frac1{2t}\lambda^2-\phi_1e^\lambda-\phi_2e^{-\lambda}}\\&\cdot\nonumber
\frac{\sinh{\lambda}}{(\phi_1-\phi_2)^3}\bigl(\phi_1+\phi_2+(\phi_1-\phi_2)(\phi_1e^\lambda-\phi_2e^{-\lambda})\bigr)
\end{align}
Now we introduce new coordinates $(r,\theta)$ as
$$
r\cos\theta=\sqrt{\rho}\,(x_1+x_2),\,\,\,\,\,\,r\sin\theta=\sqrt{1-\rho}\,(x_2-x_1),
$$
with the range
$$r\in[0,\infty),\quad
\theta\in[-\alpha,+\alpha] \quad{\rm where}\quad \cos\alpha=\sqrt\rho.
$$
In terms of $r$ and $\theta$ the integrals can be written as
\begin{align}
\nonumber
\p_\rho I_{2,k}(t,\rho)=&-\frac{e^{-t/2}}{\sqrt{2\pi t}}\frac1{\sqrt{\rho(1-\rho)}}\int_0^\infty rdr\int_{-\alpha}^\alpha d\theta \int_{-\infty}^\infty d\lambda\,e^{-\frac1{2t}\lambda^2-r\bigl(\frac{\cos\theta}{\sqrt\rho}\cosh\lambda+\sinh\lambda\bigr)}\\&\nonumber\cdot\left[\frac{\cos\theta}{r^2\sqrt\rho}+\frac{1}{2r}
\left(\frac{\cos\theta+\sqrt\rho}{\sqrt\rho}e^\lambda-\frac{\cos\theta-\sqrt\rho}{\sqrt\rho}e^{-\lambda}\right)
\right]\sinh\lambda.
\end{align}
Introducing new variable $x\sqrt\rho=\cos\theta$ and rearranging exponents, we get
\begin{eqnarray}
\nonumber
&&\p_\rho I_{2,k}(t, \rho)=-\frac{e^{-t/2}}{\sqrt{2\pi t}}\frac1{\sqrt{1-\rho}}\int_0^\infty rdr\int_1^{1/\sqrt\rho} \frac{dx}{\sqrt{1-\rho x^2}} \int_{-\infty}^\infty d\lambda\,e^{-\frac1{2t}\lambda^2+\lambda}\\&&\nonumber
e^{-rx\cosh\lambda}\left[
\frac x{r^2}\left(e^{-r\sinh\lambda}-e^{r\sinh\lambda}\right)+\frac{\sinh\lambda}r\left(
(x+1)e^{-r\sinh\lambda}+
(x-1)e^{r\sinh\lambda}
\right)
\right].\nonumber
\end{eqnarray}
Integrating over $r$ we obtain
\begin{align}
\nonumber
\p_\rho I_{2,k}(t,\rho)=&-\frac{e^{-t/2}}{\sqrt{2\pi t}}\frac1{\sqrt{1-\rho}}\int_1^{1/\sqrt\rho} \frac{dx}{\sqrt{1-\rho x^2}} \int_{-\infty}^\infty d\lambda\,e^{-\frac1{2t}\lambda^2+\lambda}\\&\cdot\nonumber
\left[x\log\left(\frac{x\cosh\lambda-\sinh\lambda}{ x\cosh\lambda+\sinh\lambda}\right)+2\sinh\lambda\,
\frac{x^2\cosh\lambda-\sinh\lambda}{x^2\cosh^2\lambda-\sinh^2\lambda}\right]\nonumber.
\end{align}
After integrating the log term by parts, we can perform the $x$ integration
\begin{align}\label{MAIN1}\nonumber
&\p_\rho I_{2,k}(t, \rho)=\frac{2t}\rho-\frac{2e^{-t/2}}{\sqrt{2\pi t}}\frac{\sqrt{1-\rho}}{\rho}\int_{-\infty}^\infty d\lambda\,
\int_1^{1/\sqrt\rho} \frac{dx}{\sqrt{1-\rho x^2}}\cdot\frac{e^{-\frac1{2t}\lambda^2}\cosh\lambda\sinh^2\lambda}{x^2\cosh^2\lambda-\sinh^2\lambda}\\&
=\frac{2t}\rho-
\frac{e^{-t/2}}{\sqrt{2\pi t}}\frac{\sqrt{1-\rho}}{\rho}\int_{-\infty}^\infty d\lambda\,e^{-\frac1{2t}\lambda^2}\frac{\cosh\lambda}{\sqrt{\coth^2\lambda-\rho}}\log\frac{\sqrt{\coth^2\lambda-\rho}+\sqrt{1-\rho}}{\sqrt{\coth^2\lambda-\rho}-\sqrt{1-\rho}}.
\end{align}
This completes the calculation of \eqref{MAINintro}.

  An important application of these asymptotics is to calculate the variances of
  the linear statistics $X_U $ \eqref{AREA} and its smooth analogue $X_{f}.$
  This can be done precisely as in Section 4 of \cite{ShZ} and Section 3 of \cite{ShZ2},
  but substituting the formula for $I_{2, k}(t, \rho)$ for $Q_k$. The details are rather
  lengthy and will be presented elsewhere.

The remainder of the article
is devoted to the asymptotics of $I_{2,k} (t, \rho)$ as $k \to \infty$ with various
relations between $t$ and $k$.  To understand the various regimes, it should
be recalled that
 the metric rescaling in \S \ref{SCALINGSECT} is necessary to  ensure
that balls in the Cartan-Killing metric on $\bcal_k$ maintain their size with respect
to the limiting Mabuchi metric on $\kcalomega$.  Without rescaling,  the Brownian motion relative to the CK metric  is probing metrics only at distance $d_k =
\epsilon_k^2 t$ from the initial background metric with respect to the
limiting metric.

 In all of the regimes, the key to finding the scaling asymptotics is to work out
 the behavior of
 
\begin{equation} \label{AMP} A(t, \rho): = \frac{1}{\sqrt{\coth^2 t-\rho}}\log\frac{\sqrt{\coth^2 t-\rho}+\sqrt{1-\rho}}{\sqrt{\coth^2 t-\rho}-\sqrt{1-\rho}} \end{equation}
as $k \to \infty$, where $t$ may depend on $k$. As will be seen below, the factors
of $(\sqrt{2 \pi t})^{-1} e^{-t/2}$ in front of the integral are always cancelled, leaving
the prefactor $\frac{\sqrt{1 - \rho}}{\rho}$. By \eqref{BERSCALING}, for pairs $(z, z + \frac{u}{\sqrt{k}})$, we have
$\rho \to e^{-|u|^2}$, and $A_k(t, \rho)$ has a limit, which depends on whether or
not we also send $t \to \infty$.
  
    %By \eqref{Szego},  $\rho$  decays exponentially
%off the diagonal like $e^{- k D(z,w)}$ \eqref{kdependence}.

\subsection{\label{FIXEDTSECT} The limit as $k \to \infty$ for fixed $t$} 
This regime corresponds to letting the Brownian motion with respect to
the Cartan-Killing metric evolve for a time $t$, and as discussed in \S \ref{SCALINGSECT} the   ball of radius $t$ in $g_{CK, k}$ metric is shrinking in size
with respect to the Mabuchi metric and has $d_k$-radius equal to $\epsilon_k t.$

The asymptotics in this regime could in principle be derived from   the formula  \eqref{I2} of \cite{FKZ2}. But more explicitly,
we note that $\rho \to 0$ as $k \to \infty$ off the diagonal. Expanding at small $\rho$, we get
\begin{align}\label{SMOOTH}
\frac1{\sqrt{2\pi t}}e^{-t/2}\int_{-\infty}^\infty d\lambda\,e^{-\frac1{2t}\lambda^2}\frac{\cosh\lambda}{\sqrt{\coth^2\lambda-\rho}}\log\frac{\sqrt{\coth^2\lambda-\rho}+\sqrt{1-\rho}}{\sqrt{\coth^2\lambda-\rho}-\sqrt{1-\rho}}=2t+\mathcal O(\rho),
\end{align}
and the first term here cancels the first term in \eqref{MAIN1}. Thus in the regime when  we hold $(z_1,z_2)$ fixed then $\rho \to 0$, we get
$$  I_{2, k}(t, \rho) \simeq a_0(t) + a_1(t) \rho + \cdots,$$
where $a_0,a_1,...$ are constants independent of $\rho$. To obtain the 2-point correlation
function of the \kahler metric, we then take four more derivatives. The 
constant $a_0$ does not contribute to the answer and we see that the two point correlation function
is the free background term $\omega_{\phi_I}(z_1) \omega_{\phi_I}(z_2)$ plus a term exponentially decaying
off the diagonal like $C_2(t)e^{- kD_I(z_1,z_2)}$.

\subsection{\label{tinfty}The limit as $t \to \infty$ for fixed $k$} 
Now we apply steepest descent to the second integral as $t\to\infty$, and keeping $k, N_k$ fixed. We obtain
\begin{align}
\nonumber \p_\rho I_{2,k}(\infty, \rho)
: = 
\lim_{t\to\infty}\p_\rho I_{2,k}(t, \rho)=\lim_{t \to \infty}\frac{2t}\rho-\frac1\rho\bigl(2t+\log(1-\rho)+\mathcal O(1/t)\bigr)=-\frac{\log(1-\rho)}{\rho}.
\end{align}
Thus we have,
\begin{equation}
\label{tinfty1}
I_{2,k}(\infty, \rho)={\rm Li}_2(\rho).
\end{equation}
As mentioned above, this is the same formula as \eqref{QN}. 
In the scaling limit around the diagonal with pairs of points of the form $(z, z + \frac{u}{\sqrt{k}})$ we have $\rho\simeq e^{-|u|^2}$.  In the next section we connect this limit with the correlations between zeros of Gaussian random holomorphic sections, see \eqref{smooth1} for a more precise statement.

\subsection{\label{METSCALINGLT} The metric scaling limit with $t \to t \epsilon_k^{-2}$ }
The goal now is to evaluate $I_{2, k}( \epsilon_k^{-2} t, \rho)$ asympotically as $k \to \infty$. This scaling 
keeps the $d_k$-balls of uniform size as $k \to \infty$ with respect to the limit
Mabuchi metric. Thus, as $k$ changes the Brownian motion with respect to
$g_k$ probes distances of size $t$ from the initial metric $\omega_0$ for all $k$.

In order to apply steepest descent for $\lambda \in [0, \infty]$ we change variables $\lambda \to \epsilon_k^{-2} \lambda$ in
\eqref{MAIN1} so that
the exponent becomes
$$ \epsilon_k^{-2} \left(- \frac{\lambda^2}{2t} + \lambda\right). $$
The saddle point occurs at $\lambda = t$,  and the critical value of the exponent
is $\frac{t}{2}$. The prefactor $e^{-t/2}$ in front of the integral, with $t \to \epsilon_k^2 t$
again cancels the critical value of the phase and the singular term. The integrand in \eqref{MAIN1} at the saddle point is asymptotic to
\begin{align}
&\frac{1}{\sqrt{\coth^2 ( \epsilon_k^{-2}t)-\rho}}\log\frac{\sqrt{\coth^2( \epsilon_k^{-2} t)-\rho}+\sqrt{1-\rho}}{\sqrt{\coth^2( \epsilon_k^{-2} t)-\rho}-\sqrt{1-\rho}}
\simeq \frac{2\epsilon_k^{-2} t}{ \sqrt{1-\rho}}  + \frac{1}{ \sqrt{1-\rho}} \log (1 - \rho) + \cdots,
\end{align}
so $$\partial_{\rho} I_{2,k}(\epsilon_k^{-2} t, \rho)  \simeq -\frac{\log (1 - \rho)}{\rho},$$ 
exactly as in \eqref{tinfty1}.
\subsection{Spatial scaling}

In this section, we consider the scaling asymptotics of $I_{2, k}(\rho)$ discussed
in \S \ref{MAIN}. The only
new element in the calculation is the scaling asymptotics of the Berezin kernel
$\rho$. The calculation of $I_{2, k}(\rho)$ above does not change.

The main input into the scaling asymptotics is the following facts about the Berezin
kernel (see \cite{ShZ,ShZ2} for background and references).
Off the diagonal one has
\begin{equation}
\label{kdependence} \rho \simeq e^{-kD_I(z_1,z_2)},
\end{equation}
where the Calabi diastasis function is given by
\begin{equation}
D_I(z_1,z_2)= \phi_I(z_1, z_1) + \phi_I(z_2,z_2) - \phi_I(z_1, z_2) - \phi_I(z_2,z_1).
\end{equation} 
Here $\phi_I(z,\bz)$ is the \kahler potential \eqref{KP} of the background Bergman metric
$\omega_{\phi_I}$, corresponding to the identity matrix $P=I$, and $\phi_I(z_1,z_2)$ is its off-diagonal analytic extension.

The Berezin kernel has a scaling limit on the `near-diagonal'
 where the distance
$d(z_1,z_2)$ between
$z_1$ and
$z_2$ satisfies an upper bound $ d(z_1,z_2)\le b\left(\frac
{\log k}{k}\right)^{1/2}$ ($b\in\R^+$). By comparison, $\rho \to 0$ rapidly on the `far off-diagonal' where  $ d(z_1,z_2)\ge b\left(\frac
{\log k}{k}\right)^{1/2}$, in the sense that for all $b, R > 0$,
$$ \nabla^j
P_k(z_1,z_2)=O(k^{-R})\qquad \mbox{uniformly for }\ d(z_1,z_2)\ge
b\,\sqrt{\frac {\log k}{k}} \;.$$
Here, $\nabla^j$ stands for the $j$-th covariant derivative.

The scaling asymptotics of this kernel near the diagonal may be described as follows.
Let  $ z\in M$.  Then
\begin{equation} \label{BERSCALING} \textstyle  P_k\left(z+\frac u{\sqrt k},z+\frac v{\sqrt k}\right) =
e^{-\frac 12 |u-v|^2}[1 + R_k(u,v)]\;, \end{equation} where
$$ \begin{array}{c}|R_k(u,v)|\le \frac {C_2}2\,|u-v|^2k^{-1/2+\ep}\,, \quad
|\nabla R_k(u)|
\le C_2\,|u-v|\,k^{-1/2+\ep}\,,
\\[8pt] |\nabla^jR_k(u,v)|\le C_j\,k^{-1/2+\ep}\quad j\ge 2\,,\end{array}$$
for $|u|+|v|<b\sqrt{\log k}$.

It follows that the scaling asymptotics of the variance term for random metrics
 is  given by 
\begin{equation} \label{SCALING} I_{2, k}\bigl(\epsilon_k^{-2} t, P_k(z, z + u/\sqrt{k})\bigr) \simeq {\rm Li}_2 (e^{-|u|^2}),  \end{equation}
just as in the limit as $t \to \infty$ first, considered in \S \ref{tinfty}.

\subsection{Energy entropy scaling}

Another natural scaling comes directly from the density \eqref{mutdef}. We separate
out the `action'  from the `amplitude' and express it in terms of the empirical measures
$d\mu_{\lambda}= \frac{1}{N_k} \sum_{j = 1}^{N_k} \delta_{\lambda_j} $ of the eigenvalues
of $\Lambda$
\begin{align}\label{mutdefa} \Delta(\lambda)\nonumber
&\Delta(e^\lambda)e^{-\frac1{4t}\sum_{j=1}^{N}\lambda_j^2}=\\&= e^{ N_k^2 \left(\int_M \int_M
\log |x - y| d\mu_{\lambda} (x) d\mu_{\lambda}(y) + 
 (\int_M \int_M
\log |e^x - e^y| d\mu_{\lambda} (x) d\mu_{\lambda}(y) ) \right)+ \frac{N_k}{2 t} \int_M
x^2 d\mu_{\lambda} }.
 \end{align}
To give all terms the same order in $N_k$ we need to rescale
the time to $t \to t/N_k$. Then the scaled measures $\mu_k$ satisfy
a large deviations principle with the rate function given by the exponent of \eqref{mutdefa}.

\section{\label{BOUNDARY}  The geodesic boundary of $\bcal_k$ and configurations of zeros}
In this section, we explain how the result of
\S \ref{tinfty} is essentially the same as the theory of zeros of random holomorphic
sections. As $t \to \infty$,  the mass of the heat kernel gets concentrated on a part
of the boundary of $\pcal_{N_k}$ corresponding to `singular metrics' given by 
zero sets of holomorphic sections.

Symmetric spaces of non-compact type have several different notions of boundary and several types of
compactifications. For background we refer to \cite{GLT}.
The boundary relevant to the heat kernel measures and their $t \to \infty$ limit
is best stated in terms of the Bergman metrics themselves and their limits
along geodesic rays of $\pcal_{N_k}$.  

\begin{maindefn} The weak* compactification of $\bcal_k$  is $\bcal_k \cup \partial
\bcal_k$ where $\partial \bcal_k$ is the set of limit points  (i.e. endpoints) of the
Bergman metrics along  Bergman geodesic rays $\omega_k(s)$. 

\end{maindefn}

In fact, the only relevant boundary points are the  ones arising from the geodesic
ray starting at the balanced metric $x_k \simeq 0$ and with initial velocity in
the direction of $\delta_N$, together with the endpoints of the $U(N_k)$-orbit of this ray.

A geodesic ray in the space of Bergman potentials is a one-parameter family of metrics whose
potentials have the form,
 $$\beta_t = \frac{1}{k}  \log \sum_j e^{t \lambda_j} |s^U_j(z)|^2, $$
where in $SL(N_k,\C)/SU(N_k)$  the ray starts at the origin and  has initial vector $(U, \Lambda)$. 
We note that
\begin{equation} \label{DIFF}  \frac{1}{k} \log \sum_j e^{t \lambda_j} |s^U_j(z)|^2 = \frac{t \lambda_{\max} }{k} + \frac{1}{k} \log  \left(|s^U_{\max}(z)|^2
+   \sum_{j \not= \max} e^{t (\lambda_j - \lambda_{\max})} |s^U_j(z)|^2\right). \end{equation}
Here, $\lambda_{\max}$ is the largest of the $\lambda_j$ and $s^U_{\max}$ is
the corresponding section.
Clearly,

\begin{equation} \label{DIFFb} \begin{array}{l}  \frac{1}{k} \log \sum_j e^{t \lambda_j} |s^U_j(z)|^2 - \sup_M  \frac{1}{k} \log \sum_j e^{t \lambda_j} |s^U_j(z)|^2 
\\ \\=
  \frac{1}{k} \log  \left(|s^U_{\max}(z)|^2
+ \sum_{\lambda_j \not= \max} e^{t (\lambda_j - \lambda_{\max})} |s^U_j(z)|^2 \right) \\ \\ - \sup_M \frac{1}{k} \log \left(  |s^U_{\max}(z)|^2
+  \sum_{\lambda_j \not= \max} e^{t (\lambda_j - \lambda_{\max})} |s^U_j(z)|^2 \right)  . \end{array} \end{equation}
This family of potentials is bounded above by $0$ and is pre-compact in $L^{p} (M)$ for all $1 \leq p < \infty$. The $\sup$ in the second term is itself
bounded above by $\frac{\log k}{k}$ and therefore we may remove the sup without changing the limit (i.e. the sup was only needed to get rid of the 
$ \frac{t \lambda_{\max} }{k}$ term). We then observe that for any Bergman geodesic ray, 
\begin{equation} \label{L1} \lim_{t \to \infty} \left\|\frac{1}{k} \log  \left(|s^U_{\max}(z)|^2
+ \sum_{\lambda_j \not= \max} e^{t (\lambda_j - \lambda_{\max})} |s^U_j(z)|^2 \right) 
- \frac{1}{k} \log |s_{\max}(z)|^2  \right\lVert_{L^1(M)} \to 0.  \end{equation}
Indeed, 
\begin{equation} \label{Ft} F_t( z): =  \left(|s^U_{\max}(z)|^2
+ \sum_{\lambda_j \not= \max} e^{t (\lambda_j - \lambda_{\max})} |s^U_j(z)|^2 \right)   \end{equation}
is monotonically decreasing to $|s^U_{\max}(z)|^2$ for each $z$. Therefore its logarithm monotonically decreases to $\log |s^U_{\max}(z)|^2$.  If we subtract 
$ \frac{1}{k} \log |s_{\max}(z)|^2 $ then the difference is always $\geq 0$ and we may remove the absolute values, and then apply the monotone convergence
theorem to take the limit under the integration sign. But the limit equals zero almost everywhere since each term tends to zero, proving \eqref{L1}.

We now consider the $(1,0)$ forms obtained by taking $\partial$ of the potentials. 
Taking $\partial$ kills the $\frac{t \lambda_{\max} }{k}$ term, and gives,
\begin{equation} \label{DIFF2}  \frac{1}{k} \partial \log \sum_j e^{t \lambda_j} |s^U_j(z)|^2 = \frac{1}{k} \partial \log \left(  |s^U_{\max}(z)|^2 +
   \sum_{\lambda_j \not= \max} e^{t (\lambda_j - \lambda_{\max})} |s^U_j(z)|^2 \right). \end{equation}

\begin{prop} The weak  limits
of Bergman metrics along geodesic rays are generically given by the normalized zero
distributions of holomorphic sections of  $L^k$, i.e. as $t 
\to \infty$, 
$$\frac{1}{k} \ddbar \log \sum_j e^{t \lambda_j} |s^U_j(z)|^2  \to
\frac{1}{ k} \ddbar\log  |s|^2. $$
If the highest weight has multiplicity $r$, then the  limit is
$\frac{1}{ k} \ddbar\log \sum_{j = 1}^r |s_j|^2 $
where $1 \leq r \leq n$ and $\{s_j\}_{j =1 }^r$ is any set of sections in $H^0(M, L^k)$. \end{prop}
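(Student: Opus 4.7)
The plan is to derive the proposition essentially as a corollary of the $L^1$ convergence \eqref{L1} already proved in the excerpt, together with the continuity of $\ddbar$ on distributions. First I would observe that, by \eqref{DIFF}, the Bergman potential $\beta_t$ differs from $\frac{1}{k}\log F_t$ (with $F_t$ given by \eqref{Ft}) by the $z$-independent constant $\frac{t\lambda_{\max}}{k}$. Since this constant is annihilated by $\ddbar$, it suffices to compute the weak limit of $\frac{1}{k}\ddbar\log F_t$.

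By \eqref{L1}, $\frac{1}{k}\log F_t \to \frac{1}{k}\log|s^U_{\max}|^2$ in $L^1(M)$, hence in the sense of distributions on $M$. Since $\ddbar$ is a constant-coefficient differential operator, it is continuous on the space of distributions, so one obtains weak convergence of $(1,1)$-currents
\[
\frac{1}{k}\ddbar\log F_t \longrightarrow \frac{1}{k}\ddbar\log|s^U_{\max}|^2
\]
as $t \to \infty$. The limit is a well-defined closed positive $(1,1)$-current, which by the Poincar\'e--Lelong formula represents the normalized current of integration over the zero divisor of $s^U_{\max}$, modulo the smooth curvature of the background metric on $L^k$.

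The multiplicity-$r$ case is handled by the same argument after a regrouping: the $r$ indices whose $\lambda_j$ achieve $\lambda_{\max}$ contribute a block $\sum_{i=1}^r |s^U_{j_i}|^2$ to $F_t$, while all remaining terms decay pointwise like $e^{t(\lambda_j - \lambda_{\max})}$. The pointwise monotonicity $F_t \downarrow \sum_{i=1}^r |s^U_{j_i}|^2$ and the monotone convergence argument behind \eqref{L1} carry over verbatim, giving the claimed limit $\frac{1}{k}\ddbar\log \sum_{i=1}^r |s^U_{j_i}|^2$. Since $U$ ranges over $U(N_k)$, the tuple $(s^U_{j_1},\ldots,s^U_{j_r})$ sweeps out all unitary frames in the maximal weight subspace, which accounts for the phrase ``any set of sections''.

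The main obstacle I expect is making the $L^1$-to-$\ddbar$ step rigorous in the presence of the logarithmic singularity of the limit potential along $\{s^U_{\max}=0\}$. The key ingredient is that $\log|s^U_{\max}|^2$ is plurisubharmonic and locally integrable on $M$, and that the pointwise monotonicity $\log F_t \downarrow \log|s^U_{\max}|^2$ ensures the difference is nonnegative and dominated by $\log F_0 - \log|s^U_{\max}|^2 \in L^1(M)$. This controls the convergence near the zero divisor, where only pointwise a.e.\ convergence would be insufficient to pass $\ddbar$ to the limit as a current. Finally, the word ``generically'' in the statement reflects the fact that on an open dense set of initial data $(U,\Lambda)$ the maximum eigenvalue is simple, so the limit is precisely the normalized zero divisor of a single holomorphic section.
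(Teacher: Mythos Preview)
Your proposal is correct and follows essentially the same approach as the paper: both arguments reduce to the $L^1$ convergence \eqref{L1} and then pass $\ddbar$ through the limit by continuity on distributions. The paper makes this last step explicit by pairing with a smooth test form $\psi$ and integrating by parts, whereas you invoke the abstract continuity of $\ddbar$; these are equivalent formulations of the same step, and your treatment of the multiplicity-$r$ case and the ``generic'' qualifier is somewhat more detailed than the paper's.
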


 \begin{proof} We take $\ddbar$ of the two terms of 
 \eqref{L1}  to obtain,

\begin{equation} \label{DIFF3}  \frac{1}{k} \ddbar \log \sum_j e^{t \lambda_j} |s^U_j(z)|^2 
\to  \frac{1}{k} 
\ddbar  \log  |s^U_{\max}(z)|^2
%+  \ddbar \sum_{j \not= \max} e^{t (\lambda_j - \lambda_{\max})} |S^U_j(z)|^2.
 \end{equation}
 in the sense of distributions. That is,  if  we integrate against a smooth $(n-1, n-1)$ form $\psi$ (i.e. a smooth function if the dimension of $M$ equals one),
then 
$$\begin{array}{l} \int_M \psi \wedge  \frac{1}{k} \ddbar \log 
\left(  |s^U_{\max}(z)|^2
+ \sum_{j \not= \max} e^{t (\lambda_j - \lambda_{\max})} |s^U_j(z)|^2 \right)  \\ \\
= \int_M  \ddbar \psi \wedge  \frac{1}{k} \log  \left(|s^U_{\max}(z)|^2
+ \sum_{\lambda_j \not= \max} e^{t (\lambda_j - \lambda_{\max})} |s^U_j(z)|^2 \right)   \end{array}$$
and by \eqref{L1} the right side tends 
$\int_M 
\ddbar \psi \wedge \frac{1}{k} \log  |s^U_{\max}(z)|^2. $

Multiplicity $r$ means that $ \lambda_{\max}$ has multiplicity $r$, and then one
sums over the associated sections $s_j$. 
\end{proof}

A key point for this article is that each of the weights in $\delta_N$ is distinct. 
Hence $r = 1$ in that direction, and therefore the boundary points in the direction
$\delta_N$ and its $U(N)$-translates all consist of $\delta$-functions
$\frac{1}{k} \ddbar \log |s(z)|^2$ of holomorphic sections of $L^k$. Every section
arises in the $U(N_k)$-orbit.

\subsection{\label{RANDOMZEROS} Relation to zeros of holomorphic sections}

In \cite{ShZ}, the authors found a  bi-potential $Q_N\in
L^1(M\times M)$ for the pair correlation function of zeros of Gaussian random
holomorphic sections
\begin{equation}\label{gaussian0}d\gamma(s)=\frac{1}{\pi^m}e^
{-|c|^2}dc\,,\qquad s=\sum_{j=1}^{n}c_js_j\,,\end{equation} on
$L^k$, where $\{s_j\}$ is an orthonormal basis  and
$dc$ is $2n$-dimensional Lebesgue measure. This Gaussian is
characterized by the property that the $2n$ real variables $\Re
c_j, \Im c_j$ ($j=1,\dots,n$) are independent Gaussian random
variables with mean 0 and variance $\half$; i.e.,
$$\E c_j = 0,\quad \E c_j c_k = 0,\quad  \E c_j \bar c_k =
\de_{jk}\,.$$

The
current of integration $Z_s$ over the zeros of one section 
is given by the Poincar\'e-Lelong
formula
\begin{equation} Z_{s} =
\frac{\sqrt{-1}}{ \pi } \partial \bar{\partial}\log |f| = \frac{\sqrt{-1}}{
\pi } \partial
\bar{\partial}\log\left\|s^N\right\|_{h} + \phi_I(z)\;,
\label{Zs}
\end{equation} 
As mentioned above, 
the limit as $t \to \infty$ of random Bergman metrics along the rays above must
be random singular metrics, and we claim that the limit ensemble is equivalent 
to the Gaussian one.   Indeed,
the limit measure is $U(N_k)$-invariant and there exists just one such measure
up to equivalence,  namely the Gaussian measure above. This explains why the
 bipotential $4\pi^2Q_k$ of \cite{ShZ} is the same as the $ t \to \infty$ limit of the two-point function  $I_{2, k} (\rho)$  of the heat kernel
ensemble in \S \ref{tinfty}, corrobrating that this limit ensemble is that of random
zeros of sections.

\subsection{\label{AN}Smooth linear statistics and asymptotic normality}

Similar to the area random variable $X_U$ but easier to work with, is the smooth linear statistic
\begin{equation} \label{Xphi} X_{f}(\omega) = \int_M f \omega, \end{equation}
where  $f \in C^{\infty} (M)$ is a smooth test function. It has a much smaller variance
than $X_U$ since the effect of `zeros along the boundary' is smoothed out. In
\cite{ShZ2} it is proved that 
\begin{equation}\var\big(X_{f} \big) = k\inv
\left[\frac{\zeta(3)}{16\pi}\,\|\Delta f\|_2^2
 +O(k^{-\frac 12 +\ep})\right]\;.\label{smooth1}\end{equation}

It was proved by  Sodin-Tsirelson
\cite{ST} in certain cases and then \cite{ShZ} for the present setting that
the random variables $X^k_{f}$ obey an asymptotic central limit theorem. Namely,
if we normalize $X_{f} $ to have mean zero and variance one, then
\begin{equation}\label{Xphi2} \frac{X_{f} -\E(X_{f})}{\sqrt{\var(X_{f})}} \end{equation}
tends in distribution to the
standard Gaussian distribution
$\ncal(0, 1)$ as $ k \to \infty$. That is,
$$
k^{1/2} \left(X_{f}-\frac k\pi\ \int_M f \om_{\phi}\right) \to 
\ncal(0, \sqrt{\kappa_1}\, \|\ddbar f\|_2)$$
in the weak sense of convergence of distributions as $ k \to \infty$, where
$\kappa_1$ is a certain positive universal constant and
 $\ncal(0, \sigma)$ denotes the (real)
Gaussian distribution of mean zero and variance
$\sigma^2$.

The same results  hold in the limiting case of the heat kernel measures when
$t \to \infty$. It would be interesting to investigate the analogous variance and asymptotic
normality results for heat kernel measures in the other regimes as $k \to \infty, t \to \infty$. The formulae of this
article combined with the techniques of \cite{ShZ, ShZ2}  make this possible.

\end{document}